\documentclass{amsart}

\usepackage{graphicx}
\usepackage{amsmath}
\usepackage{verbatim}

\newtheorem{theorem}{Theorem}[section]
\newtheorem{lemma}[theorem]{Lemma}

\theoremstyle{definition}

\theoremstyle{remark}

\numberwithin{equation}{section}

\begin{document}

\title{Configurations of  linear spaces of codimension two and the containment problem}
\author{Ben Drabkin}

\maketitle
\section{Introduction}
Let $I\subset k[x_0,\dots,x_N]$ be a homogeneous ideal.  For $r\geq 0$, the $r$-th symbolic power of $I$ is defined to be \[I^{(r)}=\bigcap_{p\in\mbox{Ass}(R/I)}(I^rR_p\cap R).\]  Symbolic powers of ideals are interesting for a number of reasons, not least of which is that, for a radical ideal $I$,the $r$-th symbolic power $I^{(r)}$ is the ideal of all polynomials vanishing to order at least $r$ on $V(I)$ (by the Zariski-Nagata theorem).

Containment relationships between symbolic and ordinary powers are a source of great interest. As an immediate consequence of the definition, $I^r\subseteq I^{(r)}$ for all $r$.  However, the other type of containment, namely that of a symbolic power in an ordinary power is much harder to pin down.  It has been proved by Ein-Lazarsfeld-Smith \cite{ELS} and Hochster-Huneke \cite{HH} that $I^{(m)}\subseteq I^r$ for all $m\geq Nr$, but as of yet there are no examples in which this bound is sharp.\\

It was conjectured by Harbourne in \cite[Conjecture 8..4.3]{PSC} (and later in  \cite[Conjecture 4.1.1]{HaHu} in the case $e=N-1$) that $I^{(m)}\subseteq I^r$ for all $m\geq er-(e-1)$, where $e$ is the codimension of $V(I)$.  While this conjecture holds in a number of important cases, some counterexamples have also been found.  Notably, the main counterexamples come from singular points of hyperplane arrangements \cite{BNAL}. One particular family is known in the  literature under the name of Fermat configurations of points cf. $\mathbb P^2$ \cite{BNAL, MS}. These have been recently generalized to Fermat-like configurations of lines in $\mathbb P^3$ in \cite{MS}. The $\rm{Ceva}(n)$ arrangement of hyperplanes in $\mathbb{P}^N$ is defined by the linear factors of \[F_{N,n}=\prod_{0\leq i<j\leq N}(x_i^n-x_j^n),\] where $n\geq 3$ is an integer.

In \cite{DST} Dumnicki, Szemberg, and Tutaj-Gasinska showed that, for
the ideal $I_{2,3}$ corresponding to all triple intersection points of
the lines defined by linear factors of $F_{2,3}$ in $\mathbb{P}^2$,
$F_{2,3}\not\in I_{2,3}^2$, but $F_{2,3}\in I_{2,3}^{(3)}$.  This was the first
counterexample to the above mentioned conjecture. Later, in \cite{MS} Malara and
Szpond generalized this construction to $\mathbb{P}^3$, by showing
that for the ideal $I_{3,n}$, corresponding to all triple intersection
lines of the planes defined by the linear factors of $F_{3,n}$,
$F_{3,n}\not\in I_{3,n}^2$, but $F_{3,n}\in I_{3,n}^{(3)}$.  In the following, the
construction of counterexamples to $I^{(3)}\subseteq I_2$ from
Fermat arrangements is generalized to $\mathbb{P}^N$ for all $N\geq
2$.

\section{Main result}
Let $n\in\mathbb{N}$.  Let $k$ be a field 
which contains a primitive $n$-th root of unity, $\varepsilon$.  For each $N\in\mathbb{N}$, let $S_N:=k[x_0,x_1,\dots,x_N]$, and define \[F_{N,n}:=\prod_{0\leq i<j\leq N}(x_i^n-x_j^n).\]
Let \[C_{N,n}:=\bigcap_{0\leq i<j\leq N}(x_i,x_j)\]\[J_{N,n}:=\bigcap_{\substack{0\leq i<j<l\leq N \\ 0\leq a,b< n}}(x_i-\varepsilon^a x_j, x_i-\varepsilon^b x_l),\]
and let
\[I_{N,n}:=J_{N,n}\cap C_{N,n}.\]
We show in Lemma \ref{lem1} that $I_{N,n}$ is the ideal of the $N-2$ dimensional flats arising from triple intersection of hyperplanes corresponding to linear factors of $F_{N,n}$.

\begin{theorem}\label{main}
For all $N\geq 2$, $I_{N,n}^{(3)}\not\subseteq I_{N,n}^2$.
\end{theorem}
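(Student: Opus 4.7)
The plan is to use $F_{N,n}$ itself as the witness, showing that $F_{N,n}\in I_{N,n}^{(3)}\setminus I_{N,n}^2$.

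\emph{Membership in the symbolic cube.} By the Zariski--Nagata theorem it suffices to verify that $F_{N,n}$ vanishes to order at least $3$ at the generic point of every associated prime of $I_{N,n}$. By Lemma~\ref{lem1}, these associated primes are the ideals of the codimension-$2$ flats $\Lambda$ where three Ceva$(n)$ hyperplanes meet. At each such $\Lambda$, exactly three linear factors of $F_{N,n}$ vanish along $\Lambda$ (each to order one) while the remaining factors are nonzero at the generic point of $\Lambda$. Hence $\mathrm{ord}_\Lambda(F_{N,n}) = 3$ for every $\Lambda$, and $F_{N,n}\in I_{N,n}^{(3)}$.

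\emph{Non-membership in the ordinary square.} This is the substantive part. I would argue by induction on $N$, with base case $N=2$ from \cite{DST} (and its generalization to all $n\geq 3$ in \cite{BNAL,MS}). For the inductive step, assume toward contradiction that $F_{N,n} = \sum_i g_i h_i$ with $g_i, h_i \in I_{N,n}$, and specialize along the hyperplane $x_N = 0$ via $\varphi\colon S_N \to S_{N-1}$. Then $\varphi(F_{N,n}) = F_{N-1,n}\cdot \prod_{i<N} x_i^n$, because the $\binom{N}{1}$ factors $x_i^n - x_N^n$ collapse to $x_i^n$. On the ideal side, the generators of $J_{N,n}$ indexed by triples $i<j<l\leq N-1$ restrict exactly to the generators of $J_{N-1,n}$, the generators of $C_{N,n}$ of the form $(x_i,x_j)$ with $j<N$ restrict to generators of $C_{N-1,n}$, and the remaining generators involving $x_N$ restrict to ideals inside $(x_0,\dots,x_{N-1})$. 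Thus $\varphi(I_{N,n})\subseteq I_{N-1,n}\cdot (x_0,\dots,x_{N-1})$, and after cancelling the monomial factor $\prod_{i<N} x_i^n$ (using the factoriality of $S_{N-1}$ and the fact that the $g_ih_i$ contain the right monomial divisors) one would conclude $F_{N-1,n}\in I_{N-1,n}^2$, contradicting the inductive hypothesis.

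\emph{Main obstacle.} The delicate step is the last cancellation: a priori, neither $\varphi(g_i)$ nor $\varphi(h_i)$ need to factor cleanly as a coordinate monomial times an element of $I_{N-1,n}$, so one must track how the monomial $\prod_{i<N} x_i^n$ distributes across the summands and over the extra generators of $\varphi(I_{N,n})$ that arise from flats not contained in $V(x_N)$. A cleaner alternative would be to avoid induction and establish a direct initial-degree bound $2\,\alpha(I_{N,n}) > n\binom{N+1}{2}$, which would force $F_{N,n}\notin I_{N,n}^2$ on degree grounds alone; proving such a lower bound on $\alpha(I_{N,n})$ for general $N$ and $n$ is itself a nontrivial combinatorial problem but may admit a uniform argument exploiting the large symmetry group $(\mathbb{Z}/n)^{N+1}\rtimes S_{N+1}$ acting on the configuration.
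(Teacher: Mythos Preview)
Your overall plan---use $F_{N,n}$ as the witness, check the symbolic-cube membership via Zariski--Nagata and Lemma~\ref{lem1}, and prove $F_{N,n}\notin I_{N,n}^2$ by induction on $N$ with a specialization killing $x_N$---is exactly the architecture of the paper's proof. The gap is precisely the one you flag as the ``main obstacle,'' and it is fatal for the specialization you chose. Under $\varphi\colon x_N\mapsto 0$ the extra factor is the monomial $x_0^n\cdots x_{N-1}^n$, and each $x_i$ \emph{is} contained in associated primes of $I_{N-1,n}^2$ (already in the minimal primes $(x_i,x_j)$ of $C_{N-1,n}$), so $x_i$ is a zerodivisor on $S_{N-1}/I_{N-1,n}^2$ and there is no clean cancellation. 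Your intermediate containment $\varphi(I_{N,n})\subseteq I_{N-1,n}\cdot(x_0,\dots,x_{N-1})$ is also not what the intersection description gives; what one actually gets is $\varphi(I_{N,n})\subseteq I_{N-1,n}\cap(x_0\cdots x_{N-1})$, which does not help.

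The paper's key idea is to use the \emph{non-homogeneous} specialization $\pi\colon x_N\mapsto 1$ instead. Then $\pi(F_{N,n})=F_{N-1,n}\cdot g$ with $g=\prod_{i<N}(x_i^n-1)$, and crucially $g\notin m=(x_0,\dots,x_{N-1})$. Since $I_{N-1,n}^2$ is homogeneous, all of its primary components lie inside $m$, so $g$ avoids every associated prime of $I_{N-1,n}^2$ and is therefore a nonzerodivisor on $S_{N-1}/I_{N-1,n}^2$. This is packaged as Lemma~\ref{lem2}, which converts $F_{N-1,n}\,g\in I_{N-1,n}^2$ directly into $F_{N-1,n}\in I_{N-1,n}^2$, closing the induction. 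One minor correction to your symbolic-cube paragraph: along the components $V(x_i,x_j)$ there are $n$ (not three) vanishing linear factors of $F_{N,n}$; your conclusion $\mathrm{ord}\geq 3$ is of course unaffected. Your proposed degree alternative $2\alpha(I_{N,n})>n\binom{N+1}{2}$ already fails for $N=2$, so it is not a viable route.
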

Before we can prove this, we must introduce a few lemmas.

\begin{lemma}\label{lem1}
The ideal $I_{N,n}$ defined above defines the union of all the $N-2$ dimensional linear spaces that are intersections of at least three hyperplanes corresponding to linear factors of $F_{N,n}$.
\end{lemma}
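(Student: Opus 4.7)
The plan is to reduce the geometric question to a linear-algebra classification of two-dimensional subspaces of $k^{N+1}$ that contain three normal vectors of a specified form, then perform a case analysis on their index supports, and finally read off the primary decomposition $I_{N,n}=J_{N,n}\cap C_{N,n}$.

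First I would note that the hyperplanes cut out by the factors of $F_{N,n}$ are $V(x_i-\varepsilon^a x_j)$ for $0\le i<j\le N$ and $0\le a<n$, with normal vectors $e_i-\varepsilon^a e_j\in k^{N+1}$. A codimension-two linear subspace $L\subset\mathbb{P}^N$ is the intersection of at least three such hyperplanes precisely when its two-dimensional annihilator $L^\perp\subset k^{N+1}$ contains at least three vectors from the family $\{e_i-\varepsilon^a e_j\}$. So the lemma reduces to enumerating such $L^\perp$.

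Next I would split into cases according to the supports of three such normals in $L^\perp$. If all three supports equal the same pair $\{i,j\}$, three distinct forms $e_i-\varepsilon^{a_k}e_j$ already span $\langle e_i,e_j\rangle$, giving $L=V(x_i,x_j)$, a primary component of $C_{N,n}$. If the three supports are the three pairs on three indices $\{i,j,l\}$, the forced linear dependence among $e_i-\varepsilon^a e_j$, $e_j-\varepsilon^b e_l$, and $e_i-\varepsilon^c e_l$ pins down $c\equiv a+b\pmod n$, and $L^\perp=\langle e_i-\varepsilon^a e_j,\,e_j-\varepsilon^b e_l\rangle$, so $L=V(x_i-\varepsilon^a x_j,\,x_i-\varepsilon^{a+b}x_l)$, a primary component of $J_{N,n}$. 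The remaining support patterns (two normals on one pair together with a third on a different pair, or three supports touching four or more indices) are shown by a short rank computation to produce a three-dimensional span, hence codimension at least three.

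Finally, each of $(x_i,x_j)$ and $(x_i-\varepsilon^a x_j,\,x_i-\varepsilon^b x_l)$ is the prime defining ideal of the corresponding linear flat, so the radical ideal of the union of all such flats is exactly the intersection $J_{N,n}\cap C_{N,n}=I_{N,n}$. The main technical care lies in making the case analysis truly exhaustive and verifying that the double parametrization of $J_{N,n}$ over triples $i<j<l$ and pairs $(a,b)\in\{0,\dots,n-1\}^2$ enumerates every triangle flat exactly once; here the key observation is that the third hyperplane forced by such a flat is $V(x_j-\varepsilon^{b-a}x_l)$, which is automatically in the arrangement because $b-a$ reduces modulo $n$.
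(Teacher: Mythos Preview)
Your proposal is correct and follows essentially the same approach as the paper: both directions amount to showing that each prime in $C_{N,n}$ and $J_{N,n}$ is a triple-intersection flat, and conversely that any three linearly dependent normals from the family $\{e_i-\varepsilon^a e_j\}$ force one of those two shapes. Your exposition is a bit more systematic---you frame things via the two-dimensional annihilator $L^\perp$ and explicitly dispose of the support patterns that would produce a three-dimensional span---whereas the paper simply starts from ``three linearly dependent binomials'' and reads off the index constraints; the content of the argument is the same.
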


\begin{proof}
Let $0\leq a,b<n$, and let $0\leq i<j\leq l\leq N$.  Then \[(x_i-\varepsilon^ax_j,x_i-\varepsilon^bx_l)=(x_i-\varepsilon^ax_j,x_i-\varepsilon^bx_l,x_j-\varepsilon^{b-a}x_l)\] defines the intersection of the three hyperplanes corresponding to $(x_i-\varepsilon^ax_j)$, $(x_i-\varepsilon^bx_l)$, and $(x_j-\varepsilon^{b-a}x_l)$.  
Furthermore $$(x_i,x_j)=(x_i-\varepsilon^ax_j : a=0,1,\dots, n),$$ so $(x_i,x_j)$ defines the intersection of  $n$ hyperplanes corresponding to linear factors of $x_i^n-x_j^n$.  

It remains to be seen that all $N-2$ dimensional linear spaces that arise as intersections of at least three hyperplanes corresponding to linear factors of $F_{N,n}$ are accounted for above. Let $L$ be the ideal defining such a linear space. then $L$ contains three linearly dependent binomials of the form $x_i-\varepsilon^ax_j, x_k-\varepsilon^bx_l, x_u-\varepsilon^cx_v$. Without loss of generality (after multiplication by appropriate powers of $\varepsilon$) this yields $i=k$ and $\{j,l\}=\{u,v\}$. If $j\neq l$ then $L=(x_i-\varepsilon^ax_j, x_i-\varepsilon^bx_l)$ is one of the primes appearing in the decomposition of $J_{N,n}$ and if $j=l$ then $L=(x_i,x_j)$ is one of the primes appearing in the decomposition of $C_{N,n}$.
\end{proof}

\begin{lemma}\label{lem2}
Let $R$ and $S$ be finitely generated graded-local Noetherian rings. Let $m$ be the homogeneous maximal ideal of $R$. Let  $I\subset R$ be a homogeneous ideal, and suppose  $F\not\in I^r$ for some $r\in\mathbb{N}$.  Let $J\subset S$ be an ideal, and let $\pi:S\rightarrow R$ be a (not necessarily homogeneous) ring homomorphism such that $\pi(J)\subseteq I$.
If  $G\in R$ is such that $\pi(G)=Fg$, where $g\not\in m$, then $G\not\in J^r$.
\end{lemma}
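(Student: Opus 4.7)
\medskip
\noindent\textbf{Proof plan.} I would argue by contrapositive: assume $G\in J^{r}$ and deduce that $F\in I^{r}$, contradicting the hypothesis on $F$. This reduces the lemma to two bookkeeping steps plus one substantive point about inverting the element $g$.

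For the bookkeeping, the first move is to unwind the hypothesis $G\in J^{r}$: write
\[
G=\sum_{k} a_{k,1}a_{k,2}\cdots a_{k,r},\qquad a_{k,j}\in J,
\]
and apply the ring homomorphism $\pi$ term by term. Because $\pi(J)\subseteq I$, each factor $\pi(a_{k,j})$ lies in $I$, so
\[
\pi(G)=\sum_{k}\pi(a_{k,1})\cdots\pi(a_{k,r})\in I^{r}.
\]
Combining this with the assumption $\pi(G)=Fg$, I get $Fg\in I^{r}$ in $R$.

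The substantive step is then to pass from $Fg\in I^{r}$ to $F\in I^{r}$. This is where the graded-local hypothesis is used: since $R$ is graded-local and $g\notin m$, the element $g$ is a unit in $R$ (or, failing that, a unit after localizing at $m$, which is harmless because $I^{r}$ is homogeneous and therefore contracted from $R_{m}$). Multiplying $Fg\in I^{r}$ by $g^{-1}$ yields $F\in I^{r}$, contradicting the assumption that $F\notin I^{r}$.

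I expect the only genuine obstacle to be the invertibility of $g$. The rest is formal: the containment $\pi(J)^{r}\subseteq I^{r}$ is immediate from $\pi(J)\subseteq I$ together with the fact that $\pi$ is a ring homomorphism, so powers are preserved. The lemma is really a statement that whenever one can witness a non-membership $F\notin I^{r}$ upstairs, one can transport that witness along $\pi$ to a non-membership $G\notin J^{r}$ downstairs, provided the ``error'' $g$ between $\pi(G)$ and $F$ is a unit in the relevant (graded-)local ring.
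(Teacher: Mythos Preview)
Your argument is correct, but only via the parenthetical hedge. The first claim, that $g\notin m$ forces $g$ to be a unit in $R$, is false in general: take $R=k[x]$ with $m=(x)$ and $g=1-x$. Graded-local means there is a unique \emph{homogeneous} maximal ideal, not a unique maximal ideal. What does work is exactly your fallback: since $I^{r}$ is homogeneous and $R$ is Noetherian, the associated primes of $R/I^{r}$ are homogeneous and hence contained in $m$; therefore $g$ is a nonzerodivisor on $R/I^{r}$ (equivalently, $I^{r}R_{m}\cap R=I^{r}$), and $Fg\in I^{r}$ yields $F\in I^{r}$. So the proof stands once you drop the ``$g$ is a unit in $R$'' phrasing and commit to the localization/nonzerodivisor version.

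The paper reaches the same conclusion by a different bookkeeping device: instead of localizing, it takes a primary decomposition $I^{r}=Q_{1}\cap\cdots\cap Q_{t}$ and argues componentwise. From $Fg\in Q_{i}$, if $F\notin Q_{i}$ then $g^{s}\in Q_{i}$ for some $s$; but each $Q_{i}$ (being a primary component of a homogeneous ideal) sits inside $m$, while $g\notin m$ forces $g^{s}\notin m$, a contradiction. Your route and the paper's are essentially two packagings of the same fact---the associated primes of $I^{r}$ lie in $m$---with yours a bit slicker once stated correctly, and the paper's a bit more explicit about why homogeneity matters.
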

\begin{proof}
Suppose by way of contradiction that such a $G$ exists and $G\in J^r$.  Then \[Fg=\pi(G)\in\pi(J^r)\subseteq(\pi(J))^r\subseteq I^r.\]
Thus $Fg\in I^r$.
Let $I^r=Q_1\cap\dots\cap Q_t$ be a primary decomposition.  Then, $Fg\in Q_i$ for each $i\in\{1,\dots,t\}$. Suppose that, for some $i$, $F\not\in Q_i$.  Then $g^s\in Q_i$ for some $s\in\mathbb{N}$.  However, $I^r$ is a homogeneous ideal, so $Q_i\subseteq m$ for all $i\in\{1,2,\dots, t\}$.  But since $g\not\in m$, we know that $g^s\not\in m$, a contradiction.  Thus $F\in Q_i$ for all $i$, which yields $F\in I^r$, a contradiction.
\end{proof}

This lemma  allows us to construct an inductive argument for the main theorem.

\begin{proof}[Proof of Theorem \ref{main}]
By Lemma \ref{lem1} $F_{N,n}$ must vanish to order 3 or greater on each of the linear spaces whose union is $V(I_{N,n})$,  thus $F_{N,n}\in I_{N,n}^{(3)}$. To finish the proof, it suffices to show that for all $N\geq 2$, $F_{N,n}\not\in I_{N,n}^2$.

We argue by induction on $N$.  For $N=2$, this is proved in the paper of Dumnicki, Szemberg, and Tutaj-Gasinska \cite{DST}.

For $N>3$, assume that $F_{N-1,n}\not\in I_{N-1,n}^2$ and consider the evaluation homomorphism $\pi:S_N\rightarrow S_{N-1}$ defined by $\pi(x_N)=1$ and $\pi(x_i)=x_i$ for $i \leq N-1$.  Then:
\[\pi(I_{N,n})\subseteq C_{N-1,n}\cap\left(\bigcap_{0\leq i<N}(x_i,1)\right)\cap J_{N-1,n}\cap \left(\bigcap_{\substack{0\leq i<j<N\\0\leq a,b<n}}(x_i-\varepsilon^a,x_i-\varepsilon^b)\right)\subseteq I_{N-1,n}.\]

We note that $\pi(F_{N,n})=F_{N-1,n}g$ where $g=\prod_{0\leq i<N}(x_i^n-1)$.  Since $g\not\in (x_0,\dots,x_{N-1})$, we conclude by Lemma \ref{lem2} that $F_{N,n}\not\in I_{N,n}^2$.
\end{proof}

\section{Concluding Remarks}
Another proof for the noncontainment noncontainment  $I_{N,n}^{(3)}\subseteq I_{N,n}^2$ has been found by Grzegorz Malara and Justinya Szpond and can bee seen in their upcoming paper \cite{MS2}.

\end{document}